\newtheorem{theorem}{Theorem}
\newtheorem{theoremc}{Theorem}
\newtheorem{cor}[theoremc]{Corollary\!\!}
\newtheorem{lem}[theorem]{Lemma}
\newenvironment{Proof}[1]{\textbf{#1.}}{\qed \vspace{5pt}}
\renewcommand\1{{\bf 1}}
\renewcommand\a{\alpha}
\renewcommand\b{\beta}
\newcommand\com[1]{}
\newcommand\C{{\mathbb C}}
\newcommand\Cc{{\let\mathcal\mathscr\mathcal C}}
\newcommand\D{{\mathcal D}}
\newcommand\DD{\mathbb{D}}
\renewcommand\l{\lambda}
\newcommand\La{\Lambda}
\newcommand\oo{\omega}
\newcommand\op[1]{\mathop{\rm #1}\nolimits}
\newcommand\ot{\otimes}
\newcommand\p{\partial}
\newcommand\R{{\mathbb R}}
\newcommand\Ss{\mathbb{S}^6}
\newcommand\vp{\varphi}
\newcommand\we{\wedge}
\newcommand\z{\sigma}
\begin{document}

 \title[Non-existence of orthogonal complex structures]{Non-existence of\\
 orthogonal complex structures on $\Ss$ \\ with a metric close to the round one}
 \author{Boris Kruglikov}
 \date{}

 \vspace{-15pt}
 \begin{abstract}
I review several proofs for non-existence of orthogonal complex structures
on the six-sphere, most notably by G.~Bor and L.~Hern\'andez-Lamoneda, but also by
K.~Sekigawa and L.~Vanhecke that we generalize for metrics close to the round one.
Invited talk at MAM-1 workshop, 27-30 March 2017, Marburg.
 \end{abstract}

\subjclass[2010]{32Q99, 53C07, 32Q60; 53C99, 32Q15}

\keywords{Complex structure, Hermitian structure, characteristic connection, canonical connection,
curvature, positivity}

 \maketitle

 \vspace{-18pt}

\section{Introduction}\label{S1}

In 1987 LeBrun\footnote{As discussed in the paper by A.\,C.\ Ferreira in this volume, the same result
was established in 1953 by A.\,Blanchard \cite{B} using the ideas anticipating twistors.
The proofs of \cite{B,LeB} are reviewed there.}
\cite{LeB} proved the following restricted non-existence result for the 6-sphere.
Let $(M,g)$ be a connected oriented Riemannian manifold.
Denote by $\mathcal{J}_g(M)$ the space of almost complex structures $J$ on $M$ that
are compatible with the metric (i.e.\ $J^*g=g$) and with the orientation.
This is the space of sections of an $SO(6)/U(3)$ fiber bundle, so whenever non-empty it
is infinite-dimensional. Associating to $J\in\mathcal{J}_g(M)$ the almost symplectic structure
$\oo(X,Y)=g(JX,Y)$, $X,Y\in TM$, we get a bijection between $\mathcal{J}_g(M)$ and
the space of almost Hermitian triples $(g,J,\oo)$ on $M$ with fixed $g$.

 \begin{theorem}\label{thm1}
No $J\in\mathcal{J}_{g_0}(\Ss)$ is integrable (is a complex structure) for the standard (round) metric
$g_0$. In other words, there are no Hermitian structures on $\Ss$ associated to the metric $g_0$.
 \end{theorem}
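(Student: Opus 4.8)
The plan is to use twistor theory, the natural framework for orthogonal almost complex structures and the circle of ideas underlying Blanchard's original approach. Over $(\Ss,g_0)$ I form the twistor bundle $\pi\colon Z\to\Ss$ whose fibre over $x$ is the space $SO(6)/U(3)\cong\mathbb{CP}^3$ of orthogonal complex structures on $T_x\Ss$ compatible with the orientation; the total space is the $12$-real-dimensional homogeneous manifold $Z=SO(7)/U(3)$. Using the Levi-Civita connection of $g_0$ to split $TZ=\mathcal H\oplus\mathcal V$ into horizontal and vertical subbundles, I equip $Z$ with the Atiyah--Hitchin--Singer almost complex structure $\mathcal J$: on $\mathcal V$ it is the fibrewise $\mathbb{CP}^3$-structure, and on $\mathcal H_{(x,J)}\cong T_x\Ss$ it is the tautological complex structure $J$. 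The whole point of this construction is that an element $J\in\mathcal J_{g_0}(\Ss)$ is the same datum as a section $s_J\colon\Ss\to Z$, and $\mathcal J$ is arranged so that $J$ is integrable if and only if $s_J$ is $\mathcal J$-holomorphic, i.e.\ $X:=s_J(\Ss)$ is a compact complex submanifold of $(Z,\mathcal J)$.

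Next I would bring in the hypothesis on the metric. In real dimension $\ge 6$ the almost complex structure $\mathcal J$ is integrable exactly when $(M,g)$ is conformally flat, the obstruction being the Weyl tensor (in real dimension four one only needs self-duality). Since $g_0$ is conformally flat, $(Z,\mathcal J)$ is an honest complex manifold, on which the conformal group of $\Ss$ acts by biholomorphisms. Thus an integrable $J$ would realise $(\Ss,J)$ as a compact complex threefold $X\subset Z$ inside this very symmetric ambient complex manifold.

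The contradiction I aim for is topological. The conformally flat model $Z$ carries, besides $\pi$, a second holomorphic fibration of Penrose type onto a \emph{projective} (hence K\"ahler) variety $\mathbb P$, whose fibres are transverse to every twistor section $s_J$. Restricting this projection to $X$ should give a holomorphic embedding of $X$ into $\mathbb P$, so that $X$ would itself be a projective, hence K\"ahler, threefold. But then $[\oo]^3\neq0$ for its K\"ahler form $\oo$, forcing $H^2(\Ss;\R)\neq0$, whereas $H^2(\Ss;\R)=0$. This is the desired contradiction. I stress that orthogonality and the round metric are used essentially: the existence of \emph{some} complex structure on $\Ss$ is an open problem, and it is precisely the conformal flatness of $g_0$ that manufactures the complex manifold $Z$ in which the section must live.

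The main obstacle is the global complex geometry of $Z=SO(7)/U(3)$: one must verify integrability of $\mathcal J$ in dimension six and, far more delicately, produce the holomorphic projection to a projective $\mathbb P$ and show that it restricts to an embedding (equivalently, that the compact threefold $X$ is non-degenerate, so that the pulled-back K\"ahler class survives in $H^2$). Should this projective-embedding step prove elusive, I would fall back on the Bochner--Weitzenb\"ock route of Sekigawa and Vanhecke: assuming $J$ integrable, derive a pointwise identity for $\nabla\oo$ and integrate it over the compact $\Ss$; the constant positive curvature of $g_0$ makes a non-negative integrand integrate to zero, which should force $\nabla J=0$. A parallel orthogonal complex structure is K\"ahler, again impossible since $H^2(\Ss;\R)=0$. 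This integral route has the advantage of being stable under small perturbations of $g_0$, which is exactly the generalisation the paper is after.
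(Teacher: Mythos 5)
Your primary route is the LeBrun--Salamon twistor argument, which the paper only mentions in its introduction; the paper's own proof is entirely different (Bor--Hern\'andez-Lamoneda: equip the canonical bundle $K=\La^{3,0}(\Ss)$ with the Levi-Civita-induced connection, compute its curvature by \eqref{NF}, and show $-i\Omega$ is a closed positive $2$-form, making $\Ss$ symplectic, contradicting $H^2_{\text{dR}}(\Ss)=0$). Your twistor route is viable in principle, but it breaks down exactly at the step you flag yourself. The ``second holomorphic fibration of Penrose type onto a projective variety $\mathbb{P}$, transverse to every twistor section and restricting to an embedding of $X$'' is not constructed, and in fact cannot exist: the twistor space of the round $\Ss$ is $Z\cong SO(8)/U(4)$, an irreducible compact Hermitian symmetric space (biholomorphic to the quadric $Q_6\subset\mathbb{CP}^7$), which has Picard number one, so it admits no non-constant holomorphic map onto a lower-dimensional projective variety with positive-dimensional fibres; moreover transversality to fibres would at best give an immersion, never injectivity. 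What you are missing is that no such detour is needed, because $Z$ is \emph{itself} K\"ahler and projective --- this is precisely the fact the paper records in Section \ref{S1} (your presentation $SO(7)/U(3)$ is the same manifold written for the isometry group rather than the conformal group, which is why the K\"ahler property became invisible to you). Given this, the contradiction is one line: $s_J$ is a holomorphic embedding, so the ambient K\"ahler form restricts to a K\"ahler form $\oo$ on $X\cong(\Ss,J)$, and $\int_X\oo^3>0$ forces $[\oo]\neq0$ in $H^2(\Ss;\R)=0$.

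Your fallback is the Sekigawa--Vanhecke route, which is the paper's Section \ref{S5}, but as sketched it is missing both its engine and its trigger. The engine is the Chern-form identity \eqref{Chern}, $8\pi\gamma_1=2\psi+\vp$, with $\psi(X,Y)=\sum R_\nabla(X,Y,e_i,Je_i)$ and $\vp(X,Y)=\op{Tr}\bigl(J(\nabla_XJ)(\nabla_YJ)\bigr)\ge0$; constant positive sectional curvature makes $g_0$ Einstein and $*$-Einstein, whence $\psi>0$ and $\gamma_1>0$. The trigger is not what you state: the vanishing of $\int_{\Ss}\gamma_1^3$ has nothing to do with curvature (curvature supplies the \emph{positivity}); it holds because $\gamma_1$ represents $c_1$, which lives in $H^2(\Ss)=0$. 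With $\gamma_1>0$ one then gets $c_1^3[\Ss]=\int_{\Ss}\gamma_1^3>0$ directly, the desired contradiction, with no need to pass through $\nabla J=0$; the variant that concludes $\vp=0$, hence $\nabla J=0$ and K\"ahler, is the paper's Theorem \ref{Thlast}, needed only when $\gamma_1$ is merely non-negative. So as written neither of your routes is complete: the first is repaired by invoking the K\"ahlerness of the twistor space instead of the impossible projection, the second by supplying the identity \eqref{Chern} and the cohomological reason for the vanishing of the integral.
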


There are several proofs of this statement, we are going to review some of those.
The method of proof of Theorem \ref{thm1} by Salamon \cite{Sal} uses the fact that the twistor space of $(\Ss,g_0)$ is
$\mathcal{Z}(\Ss)=SO(8)/U(4)$ which is a K\"ahler manifold (it has a complex structure
because $\Ss$ is conformally flat, and the metric is induced by $g_0$), and so the holomorphic embedding
$s_J:\Ss\to\mathcal{Z}(\Ss)$ would induce a K\"ahler structure on $\Ss$.

Here the symmetry of $g_0$ is used (homogeneity), so this proof is not applicable for $g\approx g_0$
(but as mentioned in \cite{BHL}, a modification of the original approach of \cite{LeB}, based on an
isometric embedding of $(\Ss,g)$ into a higher-dimensional Euclidean space, is possible).

A generalization of Theorem \ref{thm1} obtained in \cite{BHL} is as follows.

 \begin{theorem}\label{thm2}
Let $g$ be a Riemannian metric on $\Ss$. Denote by $R_g$ its Riemannian curvature,
considered as a $(3,1)$-tensor,
and by $\tilde{R}_g:\La^2T^*\Ss\to\La^2T^*\Ss$ the associated $(2,2)$ tensor (curvature operator).
Assume that its spectrum (15 functions $\lambda_i$ on $\Ss$ counted with multiplicities)
$\op{Sp}(\tilde{R}_g)=\{\lambda_{\op{min}}\leq\dots\leq\lambda_{\op{max}}\}$ is positive
$\lambda_{\op{min}}>0$ and satisfies $5\lambda_{\op{max}}<7\lambda_{\op{min}}$. Then
no $J\in\mathcal{J}_{g}(\Ss)$ is integrable.
 \end{theorem}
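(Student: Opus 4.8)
The plan is to argue by contradiction: suppose some $J\in\mathcal J_g(\Ss)$ is integrable, so that $(\Ss,g,J,\oo)$ is a compact Hermitian complex threefold. I will derive a contradiction between the Euler characteristic $\chi(\Ss)=2$ and the curvature pinching, combining (i) a Chern--Weil identity that simplifies because $H^2(\Ss;\R)=0$, with (ii) a pointwise curvature estimate driven by the spectral bound on $\tilde R_g$.

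First I would use the canonical (Chern) connection $\bar\nabla$ of the Hermitian structure, the unique $g$- and $J$-preserving connection whose torsion is of type $(2,0)+(0,2)$; for an integrable $J$ this torsion is built from $d\oo$, so $\bar\nabla$ differs from the Levi-Civita connection by an explicit tensor quadratic in $\nabla J$. Its curvature $\T\in\Omega^2(\mathrm{End}\,T^{1,0}\Ss)$ is a $\mathfrak u(3)$-valued $2$-form computing the Chern classes of $\Ss$. Set $F=\tfrac{i}{2\pi}\T$ and $p_k=\mathrm{tr}\,F^k$, and use Newton's identity $c_3=\tfrac16\bigl(c_1^3-3c_1\,p_2+2p_3\bigr)$. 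Since $[c_1]=0$ in $H^2(\Ss;\R)$ we may write $c_1=d\e$; as $c_1$ is closed and $p_2$ is a closed characteristic form, Stokes' theorem annihilates $\int c_1^3$ and $\int c_1\wedge p_2$. Hence
\[
 2=\chi(\Ss)=\int_{\Ss}c_3=\tfrac13\int_{\Ss}p_3=\tfrac13\int_{\Ss}\mathrm{tr}\,F^3,
\]
so the $6$-form $\mathrm{tr}\,F^3$ must integrate to the strictly positive number $6$.

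The heart of the argument, and the step I expect to be the main obstacle, is a pointwise estimate showing that the hypotheses force $\mathrm{tr}\,F^3\le 0$ everywhere (with strict inequality somewhere), contradicting the displayed positive integral. In a unitary frame I would expand $\T$ as the $J$-invariant part of the Riemannian curvature operator $\tilde R_g$ plus correction terms quadratic in the torsion $d\oo$ (that is, in $\nabla J$), then bound every monomial of the resulting cubic form by $\l_{\min}$ and $\l_{\max}$. This produces a numerical inequality whose sign reverses exactly at the threshold $5\l_{\max}=7\l_{\min}$, which is where the constants $5$ and $7$ originate; for the round metric all $\l_i$ coincide, so $5\l<7\l$ holds trivially and Theorem~\ref{thm1} is recovered.

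The delicate points I anticipate are controlling the sign and magnitude of the torsion contributions to $\mathrm{tr}\,F^3$ using only integrability and compatibility of $J$, and keeping the estimate sharp enough to hold the constant at $7/5$ rather than at a weaker value. A possibly cleaner route to the same pointwise inequality is to replace the Chern connection by the characteristic (Bismut) connection, whose totally skew-symmetric torsion equals $J\,d\oo$: this symmetrises the quadratic torsion corrections and may make them easier to bound against $\tilde R_g$. I would carry out both and retain whichever yields the sharp constant.
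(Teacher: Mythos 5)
Your Chern--Weil reduction is correct as far as it goes: $\int_{\Ss}c_3=\chi(\Ss)=2$, and since $H^2(\Ss;\R)=0$ the terms $\int c_1^3$ and $\int c_1\we p_2$ drop out by Stokes, leaving $\int_{\Ss}\op{tr}F^3=6>0$. But the heart of your argument --- the pointwise inequality $\op{tr}F^3\le 0$ under the pinching hypothesis --- is not proved, only hoped for, and the route you sketch toward it cannot work. The hypothesis of the theorem controls only the Riemannian curvature operator $\tilde{R}_g$; it gives no bound whatsoever on the torsion of the Chern (or Bismut) connection, i.e.\ on $\nabla J\sim d\oo$, which enters $F$ quadratically. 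So ``bounding every monomial by $\lambda_{\op{min}}$ and $\lambda_{\op{max}}$'' is impossible: the torsion contributions to the cubic form $\op{tr}F^3$ are pointwise unbounded in terms of the spectral data, and a cubic scalar built from an $\op{End}$-valued $2$-form admits no sign calculus that would let you discard them. Your statement that the constants $5$ and $7$ would ``originate'' from this estimate is likewise unsupported speculation.

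It is instructive to contrast this with how the paper handles exactly the same difficulty. The paper works with a single closed scalar $2$-form: the curvature $\Omega$ of the canonical line bundle $K=\La^{3,0}(\Ss)$ with respect to the connection induced from Levi-Civita, which splits as $\Omega=i\tilde{R}_g(\oo)+\Phi^*\we\Phi$, where $\Phi$ is the second fundamental form of $K\subset\La^3T^*\Ss\ot\C$. The unbounded term $\Phi^*\we\Phi$ is never estimated in size; instead, integrability (via the vanishing Nijenhuis tensor) forces $\Phi$ to have type $(1,0)$, whence $-i\Phi^*\we\Phi\ge0$ as a $(1,1)$-form. At the level of $2$-forms this one-sided sign information suffices: $\z_0=\oo-\Phi^*\we\Phi\ge\oo$, and the pinching --- normalized so that $\op{Sp}(\tilde{R}_g)\subset(\tfrac56,\tfrac76)$, which is where $5$ and $7$ actually come from --- makes $\|\tilde{R}_g(\oo)-\oo\|$ small enough that a perturbation lemma ($\z_0\ge\oo$ and $\|\z-\z_0\|_{\La^2}\le\tfrac1{2\sqrt{n}}$ imply $\z$ nondegenerate) yields nondegeneracy of $\Omega$. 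A nondegenerate closed $2$-form would make $\Ss$ symplectic, contradicting $H^2_{\text{dR}}(\Ss)=0$. Your cubic approach forfeits precisely this sign-definiteness, which exists for $2$-forms but not for $\op{tr}F^3$; unless you find a genuinely new mechanism to neutralize the torsion terms, the proposal has an unfillable gap at its central step.
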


This theorem will be proven in Section \ref{S4} after we introduce the notations and recall
the required knowledge in Sections \ref{S2} and \ref{S3}. Then we will give another proof
of Theorem \ref{thm1} due to Sekigawa and Vanhecke \cite{SV} in Section \ref{S5}. Then in Section \ref{S6}
we generalize it in the spirit of Theorem \ref{thm2}. Section \ref{S7} will be a short
summary and an outlook.

\smallskip

Let us start with an alternative proof of Theorem \ref{thm1} following Bor and Hern\'andez-Lamoneda \cite{BHL}.

\smallskip

 \begin{Proof}{Sketch of the proof of Theorem \ref{thm1}}
Let $K=\La^{3,0}(\Ss)$ be the canonical line bundle of the hypothetical complex structure $J$.
Equip it with the Levi-Civita connection $\nabla$ that is induced from $\La^3_\C(\Ss)$ by the
orthogonal projection. The curvature of $K$ with respect to $\nabla$ is
 \begin{equation}\label{NF}
\Omega=R_\nabla|_{\Lambda^{3,0}}+\Phi^*\we\Phi =i\tilde{R}_g(\omega)+\Phi^*\we\Phi,
 \end{equation}
where $\Phi$ is the second fundamental form (see \S\ref{S31}). It has type $(1,0)$ and so $i\Phi^*\we\Phi\leq0$ (see \S\ref{S32}).
Since for the round metric $g=g_0$ we have $\tilde{R}_g=\op{Id}$, so
 $$
i\Omega=-\omega+ i\Phi^*\we\Phi<0.
 $$
Thus $-i\Omega$ is a non-degenerate (positive) scalar valued 2-form which is closed by
the Bianchi identity. This implies that $\Ss$ is symplectic which is impossible due to
$H^2_{\text{dR}}(\Ss)=0$.
 \end{Proof}

It is clear from the proof that for $g\approx g_0$ the operator $R_g\approx\op{Id}$ is still positive,
so the conclusion holds for a small ball around $g_0$ in $\Gamma(\odot^2_+T^*\Ss)$.
It only remains to justify the quantitative claim.

\section{Background I: connections on Hermitian bundles}\label{S2}

Let $M$ be a complex $n$-dimensional manifold. 
In this section we collect the facts about calculus on $M$ important for the proof.
A hurried reader should proceed to the next section returning here for reference.

Let $\pi:E\to M$ be a Hermitian vector bundle, that is a holomorphic bundle over $M$
equipped with the Riemannian structure $\langle,\rangle$ in fibers for which the
complex structure $J$ in the fibers is orthogonal. Examples are the tangent bundle $TM$ and
the canonical line bundle $K=\Lambda^{n,0}(M)$.

Note that a Hermitian structure is given via
a $\C$-bilinear symmetric product $\odot^2(E\otimes\C)\to\C$ as follows: the restriction
$(,):E'\otimes E''\to\C$, where $E\otimes\C=E'\oplus E''=E_{(1,0)}\oplus E_{(0,1)}$
is the canonical decomposition into $+i$ and $-i$ eigenspaces of the operator $J$,
gives the Hermitian metric $\langle,\rangle:E\otimes E\to\C$, $\langle\xi,\eta\rangle=(\xi,\bar\eta)$.

There are several canonical connections on $E$.

\subsection{The Chern connection}
This is also referred to as the canonical metric \cite{GH} or characteristic \cite{GBNV} connection and
is constructed as follows. Recall that the Dolbeaux complex of a holomorphic vector bundle is
 $$
0\to\Gamma(E)\stackrel{\bar\partial}\to\Omega^{0,1}(M;E)=\Gamma(E)\otimes\Omega^{0,1}(M)\to
\Omega^{0,2}(M;E)\stackrel{\bar\partial}\to\dots
 $$
where the first Dolbeaux differential $\bar\partial$, generating all the other differentials
in the complex, is given by localization as follows (for simplicity of notations, everywhere below we keep
using $M$ for the localization).

If $e_1,\dots,e_m$ is a basis of holomorphic sections
and $\xi=\sum f^ie_i\in\Gamma(E)$ a general section, $f^i\in C^\infty(M,\C)$,
then $\bar\partial\xi=\sum\bar\partial(f^i)e_i$. It is easy to check by passing to another
holomorphic frame that this operator $\bar\partial$ is well-defined, and that its extension by
the Leibnitz rule $\bar\partial(\xi\otimes\alpha)=\bar\partial(\xi)\we\alpha+\xi\cdot d\alpha$
yields a complex, $\bar\partial^2=0$.

\begin{theorem}
There exists a unique linear connection on the vector bundle $E$, i.e.\ a map $D:\Gamma(E)\to \Omega^1(M,E)=\Omega^{1}(M)\otimes\Gamma(E)$, that is
 \begin{itemize}
\item compatible with the metric:
$d\langle\xi,\eta\rangle=\langle D\xi,\eta\rangle+\langle\xi,D\eta\rangle$,
\item compatible with the complex structure:
$D''=\bar{\partial}$. 
 \end{itemize}
The first condition is $Dg=0$ and the second implies $DJ=0$.
\end{theorem}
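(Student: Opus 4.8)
The plan is to reduce everything to a pointwise computation in a local holomorphic frame, where both existence and uniqueness fall out of a single invertible matrix equation. First I would decompose an arbitrary connection by bidegree, $D=D'+D''$ with $D'\colon\Gamma(E)\to\Omega^{1,0}(M,E)$ and $D''\colon\Gamma(E)\to\Omega^{0,1}(M,E)$. The compatibility with the complex structure fixes $D''=\bar\partial$ outright, so the entire content of the theorem is the existence and uniqueness of the $(1,0)$-part $D'$.

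Working in a local holomorphic frame $e_1,\dots,e_m$, I would write $De_i=\sum_j\theta^j_i\,e_j$. Since $D''=\bar\partial$ annihilates the holomorphic frame, $D''e_i=\bar\partial e_i=0$, the connection forms $\theta^j_i$ are automatically of type $(1,0)$. Setting $h_{i\bar j}=\langle e_i,e_j\rangle$ and $H=(h_{i\bar j})$, I would expand the metric-compatibility identity $d\langle e_i,e_j\rangle=\langle De_i,e_j\rangle+\langle e_i,De_j\rangle$ and split it by bidegree. The $(0,1)$-component is the complex conjugate of the $(1,0)$-component, hence carries no independent information once $D''=\bar\partial$; the $(1,0)$-component reads $\partial h_{i\bar j}=\sum_k\theta^k_i\,h_{k\bar j}$, i.e.\ in matrix form $\partial H=\theta^{T}H$ up to the index/transpose convention. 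Because $H$ is Hermitian positive-definite and therefore invertible, this equation has the unique solution $\theta=(\partial H)\,H^{-1}$ (suitably transposed). This gives simultaneously existence (define $D'e_i$ by this $\theta$ and $D''=\bar\partial$) and uniqueness of $D$ in the frame.

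To globalize, I would observe that the two defining conditions are intrinsic, so the locally constructed connections must agree on overlaps by the uniqueness just established and glue to a global connection $D$; equivalently one checks directly that $\theta$ transforms as a connection matrix under a holomorphic change of frame $e\mapsto eA$ with $A$ holomorphic, the holomorphy of $A$ ensuring $\bar\partial A=0$ so that the gauge term stays of type $(1,0)$. Finally, the two concluding remarks are immediate: $Dg=0$ is just a restatement of metric-compatibility, and $DJ=0$ follows because $D=D'+\bar\partial$ acts $\C$-linearly on the fibers (the $\theta^j_i$ are genuine complex-valued $1$-forms), so $D$ commutes with multiplication by $J$.

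The step I expect to require the most care is the bidegree splitting of the metric-compatibility equation: one must track the conjugate-linearity of the Hermitian pairing $\langle\xi,\eta\rangle=(\xi,\bar\eta)$ correctly to see that the $(0,1)$-equation is exactly the conjugate of the $(1,0)$-equation. This is what guarantees that the system is neither overdetermined nor underdetermined, so that the single relation $\theta=(\partial H)H^{-1}$ indeed pins down the whole connection.
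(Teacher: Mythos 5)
Your proposal is correct and follows essentially the same route as the paper: work in a local holomorphic frame, observe that $D''=\bar\partial$ forces the connection matrix to be of type $(1,0)$, split the metric-compatibility identity by bidegree (noting the $(0,1)$-part is the conjugate of the $(1,0)$-part), and solve uniquely via invertibility of the metric matrix, giving $\theta=g^{-1}\partial g$. Your added remarks on gluing over chart overlaps and on why $DJ=0$ ($\C$-linearity of $D$) only make explicit what the paper leaves implicit.
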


Above, $D''$ is the $(0,1)$-part of $D$, i.e.\ the composition of $D$ with the projection $\Omega^1(M,E)\to\Omega^{0,1}(M,E)$.

\begin{proof}
The statement is local, so we can use a local holomorphic frame $e_i$ to compute.
Thus, a linear connection $D$ is given by a connection form
$\theta=[\theta_a^b]\in\Omega^1(M;gl(n,\C))$: $De_a=\theta_a^be_b$. We use the notations
$e_{\bar a}=\bar{e_a}$, $\theta_{\bar a}^{\bar b}=\bar{\theta_a^b}$, etc, cf.\ \cite{GH}.

Let $g_{a\bar b}=\langle e_a,e_b\rangle=(e_a,e_{\bar b})$ be the components of the Hermitian metric.
The first condition on $D$ writes
 $$
dg_{a\bar b}=\langle De_a,e_b\rangle+\langle e_a,D e_b\rangle=
\theta_a^c g_{c\bar{b}}+\theta_{\bar b}^{\bar c} g_{a\bar{c}}.
 $$
The second condition means that all $\theta_a^b$ are $(1,0)$-forms, so the above formula splits:
$\partial g_{a\bar{b}}=\theta_a^c g_{c\bar{b}}$ $\Leftrightarrow$
$\bar\partial g_{a\bar{b}}=\theta_{\bar b}^{\bar c} g_{a\bar{c}}$.

Consequently, the connection form satisfying the two conditions is uniquely given by
$\theta=g^{-1}\cdot\partial g$, or
in components $\theta_a^b=g^{b\bar{c}}\partial g_{a\bar{c}}$.
\end{proof}

We will denote the Chern connection, so obtained, by $\DD$.
In particular, there is a canonical connection $\DD$ on the tangent bundle of a Hermitian manifold.
Its torsion is equal to
 $$
T_\DD=\pi_{2,0}(d^c\omega)^\sharp,
 $$
where $d^c\oo(\xi,\eta,\zeta)=-d\omega(J\xi,J\eta,J\zeta)$,
$\sharp:\Lambda^3T^*M\hookrightarrow\Lambda^2T^*M\otimes T^*M\to\Lambda^2T^*M\otimes TM$ is the index lift operator
and $\pi_{2,0}:\Lambda^2T^*M\otimes TM\to\Lambda^{2,0}T^*M\otimes TM=\{B:B(J\xi,\eta)=B(\xi,J\eta)=JB(\xi,\eta)\}$ is the projection, cf.\ \cite{GBNV}.
This implies that the Chern connection $\DD$ on $TM$
has a non-trivial torsion unless $(g,J,\oo)$ is K\"ahler.

\subsection{The Levi-Civita connection}
A Hermitian metric induces a canonical
torsionless metric connection $\nabla$ on $TM$: $\nabla g=0$, $T_\nabla=0$.

Due to computation of the torsion $T_\DD$ above,
the Levi-Civita connection $\nabla$ does not preserve $J$ unless $M$ is K\"ahler.
In other words, $\DD=\nabla$ only in this case.

Choosing the frame $e_a=\p_{z^a}$, $e_{\bar{a}}=\p_{\bar{z}^a}$, for a holomorphic
coordinate system $(z^a)$ on $M$, we get
 $$
\nabla e_a=\Gamma^c_{ab}e^b\otimes e_c+ \Gamma^c_{a\bar{b}}e^{\bar{b}}\otimes e_c+
\Gamma^{\bar{c}}_{ab}e^b\otimes e_{\bar{c}}+\Gamma^{\bar{c}}_{a\bar{b}}e^{\bar{b}}\otimes e_{\bar{c}}
 $$
and (because $g_{ab}=0=g_{\bar{a}\bar{b}}$) the Christoffel coefficients have the standard but shorter
form, e.g.
 $$
\Gamma^c_{ab}=
\tfrac12g^{c\bar{d}}\Bigl(\frac{\p g_{a\bar{d}}}{\p z^b}+\frac{\p g_{b\bar{d}}}{\p z^a}\Bigr),\
\Gamma^c_{a\bar{b}}=
\tfrac12g^{c\bar{d}}\Bigl(\frac{\p g_{a\bar{d}}}{\p\bar{z}^b}-\frac{\p g_{a\bar{b}}}{\p\bar{z}^d}\Bigr),\
\text{etc.}
 $$
Introducing the 1-forms $\vartheta_a^c=\Gamma_{ab}^ce^b+\Gamma_{a\bar{b}}^ce^{\bar{b}}$
(not necessarily of $(1,0)$-type) we obtain the induced connection on the
holomorphic bundles $T_{(1,0)}M$, $T^{(1,0)}M$ (and their conjugate):
 $$
\nabla e_a=\vartheta_a^c e_c,\ \nabla e^c=-\vartheta_a^c e^a,\ \text{etc.}
 $$

\subsection{The canonical connection}
Though we will almost not use it, let us mention also the canonical connection \cite{GBNV,T}
 \begin{equation}\label{can}
\D=\tfrac12(\nabla-J\nabla J)=\nabla-\tfrac12J\nabla(J).
 \end{equation}
This connection is both metric $\D(g)=0$ and complex $\D(J)=0$.
The price of this additional (second) property is the emergence of torsion: $T_{\D}(X,Y)=\frac12(\nabla_X(J)JY-\nabla_Y(J)JX)$.

Clearly $\nabla$ is the canonical connection iff the structure $(g,J,\omega)$ is K\"ahler.
Also, if the Chern connection $\mathbb{D}$ is canonical, then $(\nabla_XJ)Y=(\nabla_YJ)X$,
and this implies that the structure $(g,J,\omega)$ is almost K\"ahler. A Hermitian
almost K\"ahler structure is necessarily K\"ahler \cite{GrH}.

\subsection{Induced connections}
The above connections naturally induce canonical connections on the canonical bundle $K$.

For the Chern connection $\DD e_a=\theta_a^ce_c$ this is given via the section
$\Omega=e^1\we\dots\we e^n\in\Gamma(K)$ by $\DD\Omega=-\op{tr}(\theta)\ot\Omega$,
where $\op{tr}(\theta)=\theta_a^a$. Due to
$\langle\Omega,\Omega\rangle=\op{det}(g^{a\bar{b}})=\det{g}^{-1}$ we also have
$\DD\Omega=\frac{-1}2\p\log g\ot\Omega$.

Similarly, for the Levi-Civita connection $\nabla e_a=\vartheta_a^ce_c$ we get
$\nabla\Omega=-\op{tr}(\vartheta)\ot\Omega$, and in general the connection form
$\op{tr}(\vartheta)=\vartheta_a^a$ differs from that for the Chern connection
(but coincides with it in the K\"ahler case).

\subsection{Curvature and the second fundamental form}
Pick a linear connection $D$ 
on a vector bundle $E$ over $M$. Denote $\Omega^k(M,E)=\Gamma(\La^kT^*M\ot E)$.
Then $D$ can be uniquely extended to a sequence of maps
$D:\Omega^k(M,E)\to\Omega^{k+1}(M,E)$ by the Leibnitz super-rule:
for $\alpha\in\Omega^\bullet(M)$ and $s\in\Gamma(E)$ let
$D(\alpha\ot s)=d\alpha\ot s+(-1)^{|\alpha|}\a\we Ds$.

The curvature is the obstruction for $(\Omega^\bullet(M,E),D)$ to be a complex:
identify $D^2:\Omega^0(M,E)\to\Omega^2(M,E)$ with
$R_D\in\Gamma(\La^2T^*M\ot\op{End}(E))$,
$R_D(\xi,\eta)=[D_\xi,D_\eta]-D_{[\xi,\eta]}\in\Gamma(\op{End}(E))$,
$\xi,\eta\in\Gamma(TM)$.

Here it is important which sign convention we choose. In terms of the connection matrix
$\theta_E=(\theta_a^b)$ we get:
 $$
D^2e_a=D(\theta_a^be_b)=(d\theta_a^b-\theta_a^c\we\theta_c^b)e_b.
 $$
This is the Maurer-Cartan form of the curvature:
$\Theta_a^b=d\theta_a^b-\theta_a^c\we\theta_c^b$,
or in coordinate-free notation $\Theta_E=d\theta_E-\frac12[\theta_E,\theta_E]$.

Note that if $D$ is the Chern connection (on a Hermitian bundle), then $\Theta_E$ is a matrix of $(1,1)$-forms,
but for the Levi-Civita connection in general this is not the case.

Let $E_0\subset E$ be a holomorphic subbundle and $E_1=E_0^\perp$ its ortho-complement.
Since $E_0$ is a Hermitian bundle in its own, we have two first order differential operators
 $$
D_E|_{\Gamma(E_0)}:\Gamma(E_0)\to\Omega^1(M,E)\ \text{ and }\
D_{E_0}:\Gamma(E_0)\to\Omega^1(M,E_0).
 $$
The second fundamental form of the subbundle $E_0$ in $E$ with normal bundle $E_1$
is the tensor $\Phi\in\Omega^1(M)\ot\Gamma(\op{Hom}(E_0,E_1))$ given by
 $$
\Phi=D_E|_{\Gamma(E_0)}-D_{E_0}:\Gamma(E_0)\to\Omega^1(M,E_1).
 $$
Note that for $D=\DD$ the Chern connection, $\Phi\in\Omega^{(1,0)}(M,\op{Hom}(E_0,E_1))$.

The connection matrix in the splitting $E=E_0\oplus E_1$ writes
 $$
\theta_E=\begin{bmatrix}\theta_{E_0} & \Phi^*\\ \Phi & \theta_{E_1}\end{bmatrix},
 $$
where $\Phi^*=\bar\Phi^t$. Hence the curvature is
 $$
\Theta_E= d\theta_E-\theta_E\we\theta_E=
\begin{bmatrix}
d\theta_{E_0}-\theta_{E_0}\we\theta_{E_0}-\Phi^*\we\Phi & \star\ \\
\star & \star\
\end{bmatrix}
 $$
so that
 \begin{equation}\label{SFF}
\Theta_E|_{E_0}=\Theta_{E_0}-\Phi^*\we\Phi,
 \end{equation}
where for vector spaces $V,W$ and elements $\a,\b\in V^*\ot\op{End}(W)$, $X,Y\in V$, we let
$(\a\we\b)(X,Y)=\a(X)\b(Y)-\a(Y)\b(X)$, $\a^*(X)=(\a(X))^*$.

\section{Background II: positivity}\label{S3}

In this section preliminary computations are made, following \cite{BHL}.
The first subsection is just linear algebra and so is applicable to a complex vector space
$(V,J)$ ($=T_xM$), equipped with a complex-valued symmetric $\C$-bilinear non-degenerate form $(\cdot,\cdot)$
with $(X,\bar X)\ge0$. The corresponding Hermitian metric is $\langle X,Y\rangle=(X,\bar{Y})$.

Let us call a 2-form $\omega\in\La^2V^*$ {\em positive\/} (resp.\ non-negative $\omega\ge0$)
if the corresponding bilinear form $b(X,Y)=\omega(X,JY)$ is symmetric positive definite (resp.\ positive semidefinite).
In other words, this 2-form is $J$-invariant, i.e.\ $\omega\in\La^{1,1}V^*$, and
$\frac1i\omega(X',X'')>0$ (resp.\ $\ge0$) for $X\neq0$.
Here $X'=\frac12(X-iJX)$ is the projection of $X\in V$ to $V_{1,0}$
and $X''=\bar{X'}=\frac12(X+iJX)$ is the projection of $X$ to $V_{0,1}$.

Next, a 2-form $\Omega\in\La^2V^*\otimes\op{End}(W)$
with values in Hermitian endomorphisms of a complex space $W$
is {\em positive\/} (resp.\ non-negative $\Omega\ge0$)
if the scalar valued 2-form $\langle \Omega w,w\rangle$ is positive (resp.\ $\ge0$) $\forall w\neq0$.

\subsection{Projection on the canonical bundle}\label{S31}
For an anti-symmetric endomorphism $A:V\to V$ denote $\hat{A}\in\La^2V^*$ the element given
by lowering indices: $\hat{A}(v\we w)=(v,Aw)=-(Av,w)$, $v,w\in V$.

Note that $(A^*\a,\b)=(\hat{A},\a\we\b)$ for arbitrary $\a,\b\in V^*$. Indeed, using
the operator $\sharp:V^*\to V$ of raising indices, we get
 $$
(\hat{A},\a\we\b)=\hat{A}(\a^\sharp\we\b^\sharp)=(\a^\sharp,A\b^\sharp)=\a(A\b^\sharp)=(A^*\a)(\b^\sharp)=(A^*\a,\b).
 $$
Here and below star denotes the usual pull-back $A^*:\La^kV^*\to\La^kV^*$.

 \begin{lem}
Let $n=\dim_\C V$ and $\omega(X,Y)=\langle JX,Y\rangle$ be the symplectic form on $V$. Then,
denoting $\pi_0:\La^nV^*\ot\C\to\La^{n,0}V^*$ the orthogonal projection, we have
 $$
\pi_0A^*\pi_0^*=-i(\hat{A},\omega)\in\op{End}(\La^{n,0}V^*).
 $$
 \end{lem}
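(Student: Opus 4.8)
The lemma asserts an equality of two endomorphisms of the one-dimensional space $\La^{n,0}V^*$. Since this space is a complex line, any endomorphism is multiplication by a scalar, so the real content is the identification of that scalar: the operator $\pi_0\,A^*\,\pi_0^*$ acts as multiplication by $-i(\hat A,\omega)$. My plan is to compute the action of $\pi_0 A^*\pi_0^*$ on the canonical generator and read off the scalar, using the adjointness identity $(A^*\a,\b)=(\hat A,\a\we\b)$ established just above the statement.

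**Choosing a frame.** First I would pick a unitary basis $e_1,\dots,e_n$ of $V_{1,0}$ (so that $(e_a,\bar e_b)=\langle e_a,e_b\rangle=\delta_{ab}$), with dual coframe $e^1,\dots,e^n$ spanning $V^*_{1,0}=\La^{1,0}V^*$. Then $\Omega=e^1\we\dots\we e^n$ generates $\La^{n,0}V^*$, and $\pi_0^*\Omega$ injects it into $\La^nV^*\ot\C$. The pull-back $A^*$ acts on an $n$-form by the Leibniz rule, $A^*(e^1\we\dots\we e^n)=\sum_a e^1\we\dots\we A^*e^a\we\dots\we e^n$, and since $A$ is anti-symmetric its complexification preserves the type decomposition up to mixing, so I expand $A^*e^a$ in the full coframe and keep only the terms that, after wedging back with the remaining $e^b$'s and projecting by $\pi_0$ onto $\La^{n,0}$, survive. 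The surviving contribution is exactly the $(1,0)\to(1,0)$ block of $A^*$, i.e. $\sum_a A^*_{a a}$ where $A^*e^a = A^*_{ab}\,e^b + (\text{non-}(1,0)\ \text{terms})$.

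**Matching to the curvature pairing.** The scalar is therefore the $(1,0)$-trace of $A^*$, and I would identify this trace with $-i(\hat A,\omega)$. Using the adjointness identity with $\a=e^a$, $\b=\bar e^a$ gives $(A^*e^a,\bar e^a)=(\hat A, e^a\we\bar e^a)$; summing over $a$ produces on the left the desired trace (because $(A^*e^a,\bar e^a)$ picks out $A^*_{aa}$ by unitarity) and on the right the pairing of $\hat A$ with $\sum_a e^a\we\bar e^a$. The remaining point is that $\omega=\langle J\cdot,\cdot\rangle$ expressed in the unitary frame equals $i\sum_a e^a\we\bar e^a$ up to the sign fixed by the conventions in \S\ref{S3}; substituting this yields $\sum_a A^*_{aa}=-i(\hat A,\omega)$, which is the claim.

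**Main obstacle.** The genuine work is bookkeeping of factors of $i$ and signs: the definition $\omega(X,Y)=\langle JX,Y\rangle$, the projection conventions $X'=\tfrac12(X-iJX)$, and the pairing $(\hat A,\a\we\b)=\hat A(\a^\sharp\we\b^\sharp)$ all carry sign choices, and the relation $\omega=i\sum e^a\we\bar e^a$ must be derived in exactly these conventions rather than assumed. I expect everything else to be formal, so the care needed is to verify the single normalization $\omega=i\sum_a e^a\we\bar e^a$ and confirm it feeds through to the stated coefficient $-i$; once that is pinned down the two computations of the trace coincide and the lemma follows.
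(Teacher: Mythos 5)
Your proposal is correct and takes essentially the same route as the paper's own proof: a unitary frame, the Leibniz expansion of $A^*$ on $\Omega=e^1\wedge\dots\wedge e^n$ reducing the scalar to the $(1,0)$-trace $\sum_a\langle A^*e^a,e^a\rangle$, the adjointness identity $(A^*\alpha,\beta)=(\hat{A},\alpha\wedge\beta)$ applied with $\alpha=e^a$, $\beta=e^{\bar a}$, and the normalization $\omega=i\sum_a e^a\wedge e^{\bar a}$. The one point you flag as still needing verification does hold in the paper's conventions (for $n=1$, $e^1=\tfrac{1}{\sqrt{2}}(u^1+iu^2)$ gives $i\,e^1\wedge e^{\bar 1}=u^1\wedge u^2=\omega$), and indeed the paper asserts this normalization without derivation.
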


 \begin{proof}
In a unitary (holomorphic) basis $\{e_a\}$ of $V$ with the dual basis $\{e^a\}$ of $V^*$ we get
$\omega=i(e^1\we e^{\bar1}+{\dots}+e^n\we e^{\bar n})\in\La^{1,1}V^*$. The holomorphic volume form is
$\Omega=e^1\we\dots\we e^n\in\La^{n,0}V^*$. Hence
$\pi_0A^*\pi_0^*:\La^{n,0}V^*\to\La^{n,0}V^*$ is equal to
 $$
\langle A^*\Omega,\Omega\rangle=
\langle A^*e^1,e^1\rangle+{\dots}+\langle A^*e^n,e^n\rangle
=(\hat{A},e^1\we e^{\bar1}+{\dots}+e^n\we e^{\bar n})
 $$
and the last expression is $(\hat{A},-i\omega)$.
 \end{proof}

For $R\in\La^2V^*\ot\op{End}(V)$ with values in anti-symmetric endomorphisms
define $\tilde{R}\in\op{End}(\La^2V^*)$ as the composition (where $\flat=\sharp^{-1}$)
 $$
\La^2V^*\ot\op{End}_{\text{skew}}(V)\stackrel{\flat}\to
\La^2V^*\ot\La^2V^*\stackrel{\1\ot\sharp^{\we2}}\longrightarrow\La^2V^*\ot\La^2V=\op{End}(\La^2V^*).
 $$
In other words, if $R=\sum\a_k\ot A_k$, then for $\b\in\La^2V^*$ the action is
$\tilde{R}(\b)=\sum(\hat{A}_k,\b)\a_k$. Now the previous lemma implies
 \begin{cor}
Denote by $R_\nabla$ the curvature of the connection induced from the Levi-Civita connection
on $\La^nTM^*$. Then
 $$
R_\nabla|_K=i\tilde{R}(\omega)\in\Omega^2(M).
 $$
 \end{cor}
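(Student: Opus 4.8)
The plan is to assemble the corollary directly from the preceding lemma together with the description of the curvature of the Chern/Levi-Civita connection on the canonical bundle obtained in the subsection on induced connections. First I would recall that the induced connection on $K=\La^{n,0}(M)$ acts on the holomorphic volume section by $\nabla\Omega=-\op{tr}(\vartheta)\ot\Omega$, so its curvature $R_\nabla|_K$ is the scalar $2$-form obtained by taking the $\La^2T^*M$-component of $d\op{tr}(\vartheta)-\op{tr}(\vartheta)\we\op{tr}(\vartheta)$; more invariantly, $R_\nabla|_K$ is the trace over the fiber $V=T_xM$ of the curvature endomorphism of $\La^nV^*$ induced by $R_\nabla\in\La^2T^*M\ot\op{End}_{\text{skew}}(V)$.

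Next I would fix a point $x\in M$ and work purely fiberwise in $V=T_xM$, since both sides of the asserted identity are $2$-forms on $M$ and the claim is pointwise. The key observation is that the endomorphism induced on $\La^{n,0}V^*$ by an anti-symmetric $A:V\to V$ is exactly the operator $\pi_0A^*\pi_0^*$ appearing in the lemma: raising the two skew indices of $R_\nabla$ produces, for each value of the $\La^2T^*M$-factor, an anti-symmetric endomorphism $A$ of $V$, and the action of this $A$ on the line $\La^{n,0}V^*$ is the trace operator $\langle A^*\Omega,\Omega\rangle=\pi_0A^*\pi_0^*$. The lemma evaluates this scalar as $-i(\hat A,\omega)$.

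Writing $R_\nabla=\sum_k\a_k\ot A_k$ with $\a_k\in\La^2T^*M$ and $A_k$ anti-symmetric, the induced curvature on $K$ is therefore
\[
R_\nabla|_K=\sum_k\a_k\cdot\bigl(\pi_0A_k^*\pi_0^*\bigr)=\sum_k\a_k\cdot\bigl(-i(\hat A_k,\omega)\bigr)=i\sum_k(\hat A_k,\omega)\,\a_k,
\]
and by the definition of $\tilde R$ given just before the corollary, namely $\tilde R(\b)=\sum_k(\hat A_k,\b)\a_k$, the right-hand side is precisely $i\tilde R(\omega)$. This yields $R_\nabla|_K=i\tilde R(\omega)\in\Omega^2(M)$.

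The main obstacle I anticipate is bookkeeping rather than conceptual: one must verify that the sign conventions for curvature fixed earlier (the Maurer–Cartan form $\Theta_a^b=d\theta_a^b-\theta_a^c\we\theta_c^b$ and the induced trace formula $\nabla\Omega=-\op{tr}(\vartheta)\ot\Omega$) combine so that the induced operator on $\La^{n,0}V^*$ is literally $A\mapsto\pi_0A^*\pi_0^*$ with no stray sign or factor, and that the raising map $\flat$ used to define $\tilde R$ matches the index-lowering convention $\hat A$ in the lemma. I would check this consistency in a unitary holomorphic frame at $x$, where $\omega=i\sum_a e^a\we e^{\bar a}$ and $\Omega=e^1\we\dots\we e^n$, confirming that the two traces agree term by term; once the conventions are aligned the identity follows immediately from the lemma.
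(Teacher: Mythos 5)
Your strategy is the same as the paper's (decompose $R=\sum_k\a_k\ot A_k$, apply the preceding lemma fiberwise, read off $\tilde R(\oo)$), but your key step contains two sign errors that happen to cancel, so the argument as written is not correct even though the conclusion is. First, the canonical bundle sits inside $\La^nT^*M\ot\C$, a bundle built from the \emph{dual} of $TM$, and the curvature of the connection induced on a dual bundle is \emph{minus} the transpose of the curvature on the original bundle (this comes from $\nabla e^a=-\vartheta^a_c e^c$). So if the Riemann curvature on $TM$ is $\sum_k\a_k\ot A_k$, the induced curvature acting on $\La^nT^*M\ot\C$ is $-\sum_k\a_k\ot A_k^*$, not $+\sum_k\a_k\ot A_k^*$ as you assert when you say the action of $A$ on the line $\La^{n,0}V^*$ ``is the trace operator $\pi_0A^*\pi_0^*$''. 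This is precisely the step the paper makes explicit: $R_\nabla=-\sum\a_k\ot A_k^*$. Second, your displayed chain of equalities then claims $\sum_k\a_k\cdot\bigl(-i(\hat A_k,\oo)\bigr)=i\sum_k(\hat A_k,\oo)\,\a_k$, which is false: the left-hand side equals $-i\sum_k(\hat A_k,\oo)\,\a_k$. The two mistakes compensate and produce the correct $+i\tilde R(\oo)$; with the dual-bundle sign restored the arithmetic is honest, $-\sum_k\a_k\,\pi_0A_k^*\pi_0^*=-\sum_k\a_k\cdot(-i)(\hat A_k,\oo)=i\tilde R(\oo)$. Note that the consistency check you yourself propose at the end --- that the induced operator is ``literally $A\mapsto\pi_0A^*\pi_0^*$ with no stray sign'' --- would fail if carried out: there \emph{is} a stray sign, and it matters, since without it the corollary would read $-i\tilde R(\oo)$ and the positivity argument behind \eqref{NF} and Theorem \ref{thm2} would collapse.

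A secondary point: in your opening paragraph you describe $R_\nabla|_K$ both as the curvature of the projected connection on $K$ (the one determined by $\nabla\Omega=-\op{tr}(\vartheta)\ot\Omega$) and, ``more invariantly'', as the fiberwise trace of the curvature endomorphism induced by $R_\nabla\in\La^2T^*M\ot\op{End}_{\text{skew}}(V)$. These are different objects: by \eqref{SFF} they differ by the second fundamental form term $\Phi^*\we\Phi$, and the corollary (as it is used in \eqref{NF}) concerns the latter, i.e.\ the restriction $\pi_0\Theta_E\pi_0^*$ of the curvature of the full bundle $\La^nT^*M\ot\C$. Your actual computation works with the latter, so this confusion does not propagate, but the two descriptions should not be equated.
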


 \begin{proof}
If $R=\sum\a_k\ot A_k$, then $R_\nabla=-\sum\a_k\ot A_k^*$ and therefore
$\pi_0R_\nabla^*\pi_0^*=i\sum(\hat{A}_k,\omega)\a_k=i\tilde{R}(\omega)$.
 \end{proof}

 This corollary and decomposition \eqref{SFF} yield formula \eqref{NF} from the introduction.

\subsection{Type of the second fundamental form}\label{S32}
Recall \cite{GH} that for the Chern connection the second fundamental form is always of type $(1,0)$.
For the Levi-Civita connection this is not always so, however for the subbsundle
$K\subset\La^nT^*M\ot\C$ this property holds.
 \begin{lem}
The second fundamental form of the canonical bundle $K$ satisfies
$\Phi\in\Omega^{1,0}(M)\ot\Gamma(\op{Hom}(K,\La^nT^*M\ot\C/K))$.
 \end{lem}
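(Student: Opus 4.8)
The plan is to reduce the claim to a statement about the $(0,1)$-part of the induced Levi-Civita connection on $K$. Since $\Phi$ is by definition the $K^\perp$-valued part of $\nabla|_{\Gamma(K)}$, its $(0,1)$-component vanishes precisely when $\nabla_{\bar X}\Omega\in\Gamma(K)$ for every local section $\Omega$ of $K$ and every $\bar X\in T^{0,1}M$. Working in a holomorphic coordinate chart with coframe $e^1,\dots,e^n$ and $\Omega=e^1\we\dots\we e^n$, I would compute $\nabla_{\bar X}\Omega=\sum_a e^1\we\dots\we(\nabla_{\bar X}e^a)\we\dots\we e^n$ and show that each $\nabla_{\bar X}e^a$ stays in $\La^{1,0}$, so that the whole expression stays in $\La^{n,0}=K$.

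The key step is the following purely local fact about the Levi-Civita connection of any Hermitian metric: if $\bar X,\bar Y\in T^{0,1}M$ then $\nabla_{\bar X}\bar Y\in T^{0,1}M$ (equivalently, by conjugation, $\nabla_X Y\in T^{1,0}M$ for $X,Y\in T^{1,0}M$). I would prove this from the Koszul formula applied to the $\C$-bilinear extension $(\cdot,\cdot)$ of $g$. Two ingredients enter: first, $T^{0,1}M$ is isotropic for $(\cdot,\cdot)$, since $J$ is orthogonal and acts as $-i$ on $T^{0,1}M$, so $(\bar X,\bar Y)=(J\bar X,J\bar Y)=-(\bar X,\bar Y)=0$; second, $T^{0,1}M$ is involutive because $J$ is integrable, so all the Lie brackets occurring in Koszul's formula remain in $T^{0,1}M$. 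Pairing $\nabla_{\bar X}\bar Y$ against an arbitrary $\bar Z\in T^{0,1}M$ then gives zero, and since $(\cdot,\cdot)$ pairs $T^{0,1}M$ nondegenerately with $T^{1,0}M$ while vanishing on $T^{0,1}M\times T^{0,1}M$, this says exactly that the $T^{1,0}$-component of $\nabla_{\bar X}\bar Y$ vanishes. In coordinates this is the identity $\Gamma^a_{\bar b\bar c}=0$, which holds because this Christoffel symbol involves only first derivatives of the pure-type components $g_{\bar b\bar c}$, all of which are zero.

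With the lemma in hand, I would dualize: since $\La^{1,0}$ is the annihilator of $T^{0,1}M$ and $\nabla_{\bar X}$ preserves $T^{0,1}M$, it also preserves $\La^{1,0}$; indeed for $\a\in\Gamma(\La^{1,0})$ and $\bar Y\in T^{0,1}M$ one has $(\nabla_{\bar X}\a)(\bar Y)=\bar X(\a(\bar Y))-\a(\nabla_{\bar X}\bar Y)=0$. Hence $\nabla_{\bar X}e^a\in\La^{1,0}$ for each $a$, the wedge computation above keeps $\nabla_{\bar X}\Omega$ inside $K$, and therefore $\Phi$ has no $(0,1)$-part.

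The main obstacle is conceptual bookkeeping rather than computation: one must keep the two conjugate statements straight and verify that it is precisely the all-pure-type-lower-index Christoffel $\Gamma^a_{\bar b\bar c}$ (equivalently the mixing coefficient of Section~\ref{S2} restricted to $T^{0,1}$-directions) that controls $\Phi$ in the $(0,1)$ direction. It is worth noting that the analogous argument fails for the subbundle $T^{1,0}M\subset TM\ot\C$: there the relevant coefficient is $\Gamma^a_{\bar b c}$, whose Koszul expression involves derivatives of the non-vanishing mixed components $g_{\bar b c}$ and is generically nonzero. This is exactly why the conclusion is special to the canonical bundle $K$, as the preceding text anticipates.
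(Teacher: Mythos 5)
Your proof is correct, but it takes a genuinely different route from the paper's. The paper works entirely with a unitary coframe and connection matrices: it isolates the offending $(0,1)$-component $\vartheta''{}_{\bar b}^a=\sum\beta_{abc}e^{\bar c}$, uses metric compatibility (Leibniz on $d(e_a,e_b)=0$) to get skew-symmetry of $\beta_{abc}$ in $ab$, uses torsion-freeness together with the vanishing of the Nijenhuis tensor to get symmetry in $bc$, and then kills $\beta$ with the $S3$-lemma $(V\we V\ot V)\cap(V\ot V\odot V)=0$; finally it wedges this coframe statement up to $K$. You instead prove the equivalent statement on the tangent bundle -- $\nabla_{\bar X}\bar Y\in T^{0,1}M$ for $\bar X,\bar Y\in T^{0,1}M$ -- directly from the Koszul formula, using isotropy of $T^{0,1}M$ for the $\C$-bilinear extension of $g$ and involutivity of $T^{0,1}M$, and then dualize to $\La^{1,0}$ and wedge up to $K$. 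The three inputs (metric compatibility, torsion-freeness, integrability) are identical, but your packaging is more economical: the Koszul formula absorbs the first two at once, involutivity absorbs the third, and the $S3$-lemma and frame bookkeeping disappear. Your route also makes transparent why the conclusion is special to $K$ and fails for $T^{1,0}M\subset TM\ot\C$: annihilator duality ties $\La^{1,0}$ to $T^{0,1}M$, which $\nabla_{\bar X}$ preserves, not to $T^{1,0}M$, which it does not -- a point the paper's computation leaves implicit. One pedantic correction to your closing aside: the coefficient controlling the second fundamental form of $T^{1,0}M\subset TM\ot\C$ is $\Gamma^{\bar a}_{\bar b c}$ rather than $\Gamma^a_{\bar b c}$; your actual point, that it involves derivatives of the non-vanishing mixed components of $g$ and is generically nonzero, stands.
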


 \begin{proof}
Let us first prove the same property for the subbundle $\La^{1,0}(M)\subset T^*M\ot\C$.
Let $e_a$ be a (local) unitary frame and $e^a$ the dual co-frame.
Since $\nabla e^a=-\vartheta_c^ae^c-\vartheta_{\bar c}^ae^{\bar c}$, the claim means
$\vartheta_{\bar c}^a\in\Omega^{1,0}(M)$.

Decompose the $(0,1)$-part of this connection form
component: $\vartheta''{}_{\bar b}^a=\sum\beta_{abc}e^{\bar c}$.
The Leibnitz rule applied to $d(e_a,e_b)=0$ implies that $\beta_{abc}$ is skew-symmetric in $ab$.

On the other hand, $\nabla$ is torsion-free and so
 $$
de^a=\op{alt}[\nabla e^a]=-\sum(\vartheta^a_c\we e^c+\vartheta_{\bar c}^a\we e^{\bar c}).
 $$
The Nijenhuis tensor of $J$ vanishes $0=d^{-1,2}:\Omega^{1,0}(M)\to\Omega^{0,2}(M)$
and this implies $\sum\vartheta''{}_{\bar c}^a\we e^{\bar c}=0$. Therefore
$\beta_{abc}$ is symmetric in $bc$, and the $S3$-lemma
$(V\wedge V\otimes V)\cap(V\otimes V\odot V)=0$
 yields $\beta_{abc}=0$, i.e.\ $\vartheta''{}_{\bar c}^a=0$.

Now we pass to the subbundle $K=\La^{n,0}(M)\subset\La^nT^*M\ot\C$. Since
 $$
\nabla(e^1\we\dots\we e^n)=-\sum\vartheta^a_{\bar c}\ot(e^1\we\dots\we e^{a-1}\we
e^{\bar c}\we e^{a+1}\we\dots\we e^n)\,\op{mod}K
 $$
the claim follows.
 \end{proof}

 \begin{cor}
We have: $-i\Phi^*\we\Phi\ge0$.
 \end{cor}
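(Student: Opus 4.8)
The goal is to show $-i\,\Phi^*\wedge\Phi\ge0$, where $\Phi$ is the second fundamental form of $K=\Lambda^{n,0}(M)$ inside $\Lambda^nT^*M\otimes\C$. The plan is to combine the preceding lemma, which identifies the type of $\Phi$ as $(1,0)$, with the general definition of positivity for $\operatorname{End}$-valued $2$-forms from the start of \S\ref{S3}. Since $K$ is a line bundle, the endomorphism part of $\Phi^*\wedge\Phi$ acts on a one-dimensional space, so positivity reduces to checking the sign of a single scalar-valued $2$-form, namely $\langle(\Phi^*\wedge\Phi)\,s,s\rangle$ for a generating section $s$ of $K$.

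First I would fix a local unitary frame and write $\Phi=\sum_j\alpha_j\otimes h_j$ with $\alpha_j\in\Omega^{1,0}(M)$ by the previous lemma, where the $h_j\in\operatorname{Hom}(K,K^\perp)$ encode the normal components. Using the definition $(\Phi^*\wedge\Phi)(X,Y)=\Phi^*(X)\Phi(Y)-\Phi^*(Y)\Phi(X)$ together with $\Phi^*(X)=(\Phi(X))^*$, the scalar $2$-form $\langle(\Phi^*\wedge\Phi)s,s\rangle$ becomes a sum of terms of the shape $\overline{\alpha_j}\wedge\alpha_k$ paired with the Hermitian inner products $\langle h_k s,h_j s\rangle$ of the normal images. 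The key structural point is that, because every $\alpha_j$ is of type $(1,0)$, each wedge $\alpha_j\wedge\overline{\alpha_k}$ is of type $(1,1)$, so the whole expression lands in $\Lambda^{1,1}$; this is exactly the condition for the scalar $2$-form to be comparable to $\omega$ in the positivity ordering of \S\ref{S3}.

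The heart of the argument is then a pointwise linear-algebra computation: evaluating the resulting $(1,1)$-form on the pair $(X',X'')$ that appears in the definition of positivity, and checking that multiplying by $-i$ (equivalently, by $\tfrac1i$) produces a non-negative quantity. Concretely, for a single $(1,0)$-form $\alpha$ one has $\tfrac1i(\overline\alpha\wedge\alpha)(X',X'')=|\alpha(X)|^2$ up to a positive constant, and the full $\Phi^*\wedge\Phi$ assembles into a Gram-type expression $\sum_{j,k}\langle h_k s,h_j s\rangle\,\overline{\alpha_j(X)}\,\alpha_k(X)$, which is manifestly a non-negative Hermitian form in the data $(\alpha_j(X))_j$. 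The sign convention making $i\Phi^*\wedge\Phi\le0$ in \eqref{NF} is precisely matched so that $-i\Phi^*\wedge\Phi\ge0$ here.

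The main obstacle I anticipate is bookkeeping rather than conceptual: tracking the conjugations and the placement of $i$ through the definitions of $\Phi^*$, of the wedge of $\operatorname{End}$-valued forms, and of the positivity convention, so that the final Gram form comes out with the correct overall sign. A secondary subtlety is confirming that the $(1,0)$-type of $\Phi$ is genuinely what forces the wedge into $\Lambda^{1,1}$ and thereby into the regime where the scalar-valued positivity test applies; without that type statement the expression could acquire $(2,0)$ or $(0,2)$ components on which the comparison with $\omega$ would fail. Once the type is used and the Hermitian Gram structure is exposed, non-negativity is immediate.
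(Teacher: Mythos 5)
Your proposal is correct and follows essentially the same route as the paper: both arguments rest on the $(1,0)$-type of $\Phi$ from the preceding lemma and then evaluate $-i\Phi^*\wedge\Phi$ on the pair $(X',X'')$, where the cross terms vanish and the expression collapses to the manifestly non-negative quantity $(\Phi(X'))^*\Phi(X')$. Your local-frame expansion into a Gram form $\sum_{j,k}\langle h_k s,h_j s\rangle\,\overline{\alpha_j(X)}\,\alpha_k(X)$ is exactly this same norm-square computation written out in components.
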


 \begin{proof}
Since $\Phi$ has type $(1,0)$ we conclude
 $$
\tfrac1i(-i\Phi^*\we\Phi)(X',X'')=\Phi^*(X'')\Phi(X')=(\Phi(X'))^*\Phi(X')\ge0.
 $$

\vspace{-19pt}
 \end{proof}

\section{Proof of Theorem \ref{thm2}}\label{S4}

This section contains the proof of Theorem \ref{thm2}, following the approach of \cite{BHL}.
Theorem \ref{thm1} is an immediate corollary.

We first prove a quantitative assertion that a small perturbation of a positive form is positive.
Let $(g,J,\omega)$ be a (linear) Hermitian structure on a vector space $V$,
$n=\dim_\C V=\tfrac12\dim_\R V$. The Euclidean structure on $V$ induces the following
norm on $\op{End}(V)$: $\|A\|_E=\sqrt{\sum|A_i^j|^2}$, where $A=[A_i^j]$ is the matrix representation
in some unitary basis. It also yields the norm $\z\mapsto\|\z\|_{\La^2}$ on $\La^2V^*$.

Note that the embedding $\La^2V^*\simeq\op{End}_\text{skew}(V)\subset\op{End}(V)$,
$\hat{A}\mapsto A$, scales the norm: $\|\z\|^2_E=2\|\z\|_{\La^2}^2$.
Below we identify $\hat{A}$ with $A$.

 \begin{lem}\label{LL}
Let $\z_0$ be a real $(1,1)$-form and $\z_0\geq\omega$. Then any real $(1,1)$-form
$\z$, such that $\|\z-\z_0\|_{\La^2}\leq\frac1{2\sqrt{n}}$, is nondegenerate.
 \end{lem}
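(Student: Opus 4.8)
The plan is to show that the symmetric bilinear form $b_\sigma(X,Y)=\sigma(X,JY)$ associated to $\sigma$ stays positive definite under the perturbation, which forces $\sigma$ to be nondegenerate as a $2$-form. Since $\sigma_0\ge\omega$ means $b_{\sigma_0}-b_\omega\ge0$ and $b_\omega$ is the identity (the round Hermitian inner product), we have $b_{\sigma_0}\ge\mathrm{Id}$, so the smallest eigenvalue of $b_{\sigma_0}$ is at least $1$. It then suffices to control how far $b_\sigma$ can drift from $b_{\sigma_0}$ in operator norm: if the spectral (operator) norm of $b_\sigma-b_{\sigma_0}$ is strictly less than $1$, then $b_\sigma$ remains positive definite, hence invertible, hence $\sigma$ is nondegenerate.

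\textbf{From the Frobenius bound to the spectral bound.}
The hypothesis is stated in the $\Lambda^2$-norm, $\|\sigma-\sigma_0\|_{\Lambda^2}\le\frac1{2\sqrt n}$, so the first step is to translate this into an estimate on the endomorphism $A=\widehat{\sigma-\sigma_0}\in\op{End}_{\text{skew}}(V)$. Using the identification noted just before the lemma, $\|\sigma-\sigma_0\|^2_E=2\|\sigma-\sigma_0\|_{\Lambda^2}^2$, so $\|A\|_E\le\frac{\sqrt2}{2\sqrt n}=\frac1{\sqrt{2n}}$. Since the Frobenius norm dominates the operator (spectral) norm, $\|A\|_{\mathrm{op}}\le\|A\|_E\le\frac1{\sqrt{2n}}$. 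The point is then that passing from $A$ (a skew endomorphism, i.e.\ the $2$-form viewed through $\hat{\,\cdot\,}$) to the symmetric operator $b_\sigma-b_{\sigma_0}$ is just composition with $J$, which is orthogonal and hence preserves the operator norm; so $\|b_\sigma-b_{\sigma_0}\|_{\mathrm{op}}=\|A\|_{\mathrm{op}}\le\frac1{\sqrt{2n}}<1$ for every $n\ge1$.

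\textbf{Concluding.}
With $\|b_\sigma-b_{\sigma_0}\|_{\mathrm{op}}<1$ and $b_{\sigma_0}\ge\mathrm{Id}$, Weyl's inequality (or just $\langle b_\sigma X,X\rangle\ge\langle b_{\sigma_0}X,X\rangle-\|b_\sigma-b_{\sigma_0}\|_{\mathrm{op}}\|X\|^2\ge(1-\tfrac1{\sqrt{2n}})\|X\|^2>0$) shows $b_\sigma$ is positive definite. A positive definite $b_\sigma$ means $\sigma$ pairs nontrivially with $JX$ against every nonzero $X$, so $\sigma$ has trivial kernel and is nondegenerate as claimed.

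\textbf{The main obstacle.}
The genuinely delicate point is not the linear algebra but getting the constants to line up: one must be careful that $\sigma$ is only assumed to be a $(1,1)$-form, not assumed a priori $J$-invariant in a way that makes $b_\sigma$ automatically symmetric, and that the factor $\frac1{2\sqrt n}$ in the hypothesis is exactly what survives the norm conversion $\|\cdot\|_E^2=2\|\cdot\|_{\Lambda^2}^2$ and the comparison $\|\cdot\|_{\mathrm{op}}\le\|\cdot\|_E$. The chain of inequalities has slack (the resulting bound $\frac1{\sqrt{2n}}$ is comfortably below $1$), so the real work is verifying that each norm identity is applied on the correct space and that composing with $J$ truly leaves the operator norm unchanged; once those bookkeeping issues are settled the nondegeneracy is immediate.
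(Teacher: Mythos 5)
Your proof is correct, but it takes a genuinely different route from the paper's. The paper diagonalizes $\omega$ and $\sigma_0$ simultaneously in a unitary coframe, writes the \emph{multiplicative} decomposition $\sigma=\sigma_0\cdot\bigl(\mathbf{1}+\sigma_0^{-1}(\sigma-\sigma_0)\bigr)$, bounds $\|\sigma_0^{-1}\|_E\le\sqrt{2n}$ from the eigenvalue estimate $\lambda_a\ge1$, and then uses submultiplicativity of the Frobenius norm together with the Neumann series to conclude that the second factor, hence $\sigma$, is invertible. You instead argue \emph{additively} at the level of the quadratic form: $b_{\sigma_0}\ge\mathrm{Id}$, the perturbation $b_\sigma-b_{\sigma_0}$ has operator norm at most $\|A\|_E\le\tfrac{1}{\sqrt{2n}}<1$, hence $b_\sigma\ge\bigl(1-\tfrac{1}{\sqrt{2n}}\bigr)\mathrm{Id}>0$. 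Your version buys three things: it proves the stronger conclusion that $\sigma$ is positive, not merely nondegenerate; it avoids diagonalization and inversion of $\sigma_0$ altogether (only the lower bound $b_{\sigma_0}\ge\mathrm{Id}$ is used); and it has genuine slack in the constant, whereas the paper's chain of inequalities, under the non-strict hypothesis $\|\sigma-\sigma_0\|_{\Lambda^2}\le\tfrac{1}{2\sqrt n}$, really only yields $\|\sigma_0^{-1}(\sigma-\sigma_0)\|_E\le1$ rather than the claimed strict inequality, so the Neumann-series step needs an extra word in the boundary case -- your bound $\tfrac{1}{\sqrt{2n}}$ is strictly below $1$ for every $n\ge1$. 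One small remark: the worry in your final paragraph is vacuous. A real $(1,1)$-form is by definition $J$-invariant, so $b_\sigma(Y,X)=\sigma(Y,JX)=\sigma(JY,-X)=\sigma(X,JY)=b_\sigma(X,Y)$ is automatically symmetric; and in fact symmetry is not even needed, since positivity of the function $X\mapsto\sigma(X,JX)$ alone already forces $\sigma(X,\cdot)\neq0$ for $X\neq0$.
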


 \begin{proof}
Recall that if $\|A\|_E<1$ for $A\in\op{End}(V)$, then $\1-A\in\op{End}(V)$ is invertible. Indeed,
$(\1-A)^{-1}=\sum_{k=0}^\infty A^k$.

Diagonalize $\oo$ and $\z$ simultaneously: in some unitary co-frame $e^a$
 $$
\oo=i\sum e^a\we e^{\bar a},\qquad \z_0= i\sum \l_a e^a\we e^{\bar a},
 $$
and $\l_a\geq1$ by the assumptions. Then $\z_0^{-1}=-i\sum \l_a^{-1} e^a\we e^{\bar a}$
and $\|\z_0^{-1}\|_E^2=2\sum\l_a^{-2}\leq 2n$.

Decompose $\z=\z_0+(\z-\z_0)=\z_0\cdot(\1+\z_0^{-1}(\z-\z_0))$. The claim follows from
$\|\z_0^{-1}(\z-\z_0)\|_E\leq\|\z_0^{-1}\|_E\cdot \|\z-\z_0\|_E<
\sqrt{2n}\cdot\frac{\sqrt{2}}{2\sqrt{n}}=1$.
 \end{proof}

Now the proof of Theorem \ref{thm2} is concluded as follows. Let $n=3$.
Normalize $g$ by the requirement $\op{Sp}(\tilde{R})\in(\frac56,\frac76)$.
Then $\op{Sp}(\1-\tilde{R})\in(-\frac16,\frac16)$,
and so $\|\tilde{R}(\omega)-\omega\|<\frac16\|\omega\|=\frac16\sqrt{3}=\frac1{2\sqrt{3}}$.
By \eqref{NF} we get
 $$
-i\Omega=\tilde{R}(\omega)-i\Phi^*\we\Phi=(\tilde{R}(\omega)-\omega)+(\oo-\Phi^*\we\Phi).
 $$
Since $\z_0=\oo-\Phi^*\we\Phi\ge\oo$, then by Lemma \ref{LL} we conclude that $i\Omega$,
and hence $\Omega$ are nondegenerate. Since $\Omega$ is closed by Bianchi's identity,
it is symplectic on $\Ss$, which is a contradiction. \qed

\section{Another approach}\label{S5}

In this section we give yet another proof of Theorem \ref{thm1} due to K. Sekigawa and
L. Vanhecke \cite{SV}. We should warn the reader of some unspecified sign choices
in their paper, which we amend here.

Our sign conventions in this respect are in agreement with \cite{Gr,GBNV},
though in these sources the curvature is defined as minus that of ours.
Since there are several differences in sign agreements, for instance in passing from $(g,J)$ to
$\omega$, in Ricci contraction etc, this will be reflected in sign differences of our formulae,
which otherwise are fully equivalent.

\subsection{The first Chern class}
Given a connection $D$ and its curvature tensor $R_D$ on an almost Hermitian manifold $(M,g,J)$ of dimension $2n$
define its holomorphic Ricci curvature by
 $$
\op{Ric}^*_D(X,Y)=-\op{Tr}\Bigl(R(X,J\cdot)JY\Bigr)=\frac12\sum_{i=1}^{2n} R_D(X,JY,e_i,Je_i),
 $$
where $e_1,e_2=Je_1,\dots,e_{2n-1},e_{2n}=Je_{2n-1}$ is a $J$-adapted orthonormal basis.
For the characteristic (Chern) connection $\mathbb{D}$ the 2-form
 $$
\gamma_1(X,Y)=\frac{-1}{2\pi}\op{Ric}^*_{\mathbb{D}}(X,JY)=\frac1{2\pi}\op{Ric}^*_{\mathbb{D}}(JX,Y)
 $$
represents the first Chern class $c_1=[\gamma_1]$, see \cite{GBNV}.
When passing to the Levi-Civita connection $\nabla$, this simple formula is modified.

A relation between the two connections is given by \cite[(6.2)]{GBNV}
that, in the case of integrable $J$, states
 $$
g(\mathbb{D}_XY,Z)= g(\D_XY,Z)-\tfrac12g(JX,\nabla_Y(J)Z-\nabla_Z(J)Y),
 $$
with the canonical connection $\D$ given by (\ref{can}). This allows to express
the first Chern form in terms of $\nabla$ (the curvature of $\D$ is expressed through that
of $\nabla$ in \cite{T}, and the curvature of $\mathbb{D}$ -- in \cite{GBNV}).

Define the 2-forms $\psi(X,Y)=-2\op{Ric}^*_{\nabla}(X,JY)=\sum R_\nabla(X,Y,e_i,Je_i)$ and
$\vp(X,Y)=\op{Tr}\Bigl(J(\nabla_XJ)(\nabla_YJ)\Bigr)=-\sum(\nabla_XJ)^a_b(\nabla_{JY}J)^b_a$.
With these choices (cf.\ \cite{GBNV,SV}) the first Chern form is given by
 \begin{equation}\label{Chern}
8\pi\gamma_1=2\psi+\vp.
 \end{equation}

 \subsection{Alternative proof of Theorem \ref{thm1} }
Now suppose that $g$ has constant sectional curvature $k>0$, i.e.\
 $$
R_\nabla(X,Y,Z,T)=g(R_\nabla(X,Y)Z,T)=k\cdot(g\varowedge g)(X,Y,Z,T),
 $$
where $(g\varowedge g)(X,Y,Z,T)=g(X,Z)g(Y,T)-g(X,T)g(Y,Z)$ is the Kulkarni-Nomizu product, whence
 \begin{gather*}
\op{Ric}_\nabla(X,Y)=\sum R_\nabla(X,e_i,Y,e_i)=(2n-1)\,k\,g(X,Y),\\
\op{Ric}^*_\nabla(X,Y)=\sum R_\nabla(X,e_i,JY,Je_i)=k\,g(X,Y).
 \end{gather*}
In other words, this metric $g$ is both Einstein and $*$-Einstein, and the scalar and $*$-scalar
curvatures are both positive.

Thus both $\op{Ric}_\nabla$ and $\op{Ric}^*_\nabla$ are positive definite, and hence $\psi>0$.
Now since $\vp(X,JX)=\|\nabla_{JX}J\|^2=\|\nabla_XJ\|^2\ge0$, we have $\vp\ge0$, and
consequently $\gamma_1>0$. Integrating $\gamma_1^n$ yields $c_1^n(M)\neq0$.

Returning to the case $M=\Ss$, $n=3$, and the standard round metric $g=g_0$
of constant sectional curvature 1,
we obtain a contradiction because $c_1\in H^2(\Ss)=0$,
and so $c_1^3=0$ as well. \qed

\section{Generalization of the idea of Section \ref{S5}}\label{S6}

If we perturb the metric $g$ starting from $g_0$, it is no longer $*$-Einstein, and
the argument of the previous section literally fails.

However, since the space of $g$-orthogonal complex structures
$\mathcal{J}_g\simeq O(2n)/U(n)$
is compact, the image of the map
 $$
\mathcal{J}_g\ni J\mapsto\op{sym}[\psi(\cdot,J\cdot)]\in\Gamma(\odot^2T^*M)
 $$
is close to the  one-point set $\{g_0\}$ (because $g_0$ is $*$-Einstein)
and so is positive for $g$ sufficiently close to $g_0$. Thus we still get
the inequality $8\pi\gamma_1=2\psi+\vp>0$ as in the previous section, and so
conclude non-existence of $g$-orthogonal complex structures $J$ on $\Ss$ for an open set
of metrics $g\in\Gamma(\odot^2_+T^*\Ss)$ in $C^2$-topology.

A quantitative version of this idea is a novel result given below.

\subsection{Bounds in the space of curvature tensors}
Fix a Euclidean space $V$ of even dimension $2n$ with metric $g=\langle\cdot,\cdot\rangle$,
and consider the space $\mathcal{R}$ of algebraic curvature tensors on it.
Identifying $(3,1)$ and $(4,0)$ tensors via the metric,
$\mathcal{R}=\op{Ker}[\wedge:\odot^2\Lambda^2V^*\to\Lambda^4V^*]$.

In this subsection we restrict to linear tensors in $V$. Denote by $\mathcal{P}$ the space
$\{R\in\mathcal{R}:\op{Ric}^*_R(X,X)\ge0\ \forall X\in V,\forall J\in\mathcal{J}_g\}$,
where $\op{Ric}^*_R$ is computed via $R$ and $J$ as in the previous section.

This can be exposed in index terms as follows.
Denote by $\mathcal{F}_g$ the space of $g$-orthonormal frames $e=\{e_1,\dots,e_{2n}\}$ on $V$.
Each such frame yields an orthogonal complex structure on $V$ by
$Je_i=(-1)^{i-1}e_{i^\#}$, where $i^\#=i-(-1)^i$. For every $e\in\mathcal{F}_g$ and $R\in\mathcal{R}$
compute $\alpha_{ij}=\sum_{k=1}^{2n}R(e_i,e_k,e_{j^\#},e_{k^\#})=(-1)^{i+j}\alpha_{j^\#i^\#}$
and form the symmetric matrix $A$ with entries $a_{ij}=\frac12(\alpha_{ij}+\alpha_{ji})$.
Then $R\in\mathcal{P}$ iff $A$ is positive semidefinite for every $e\in\mathcal{F}_g$,
and this can be determined by finite-dimensional optimization via the Silvester criterion.

A simple sufficient criterion for this is the following. Introduce the following $L^\infty$-norm
on $\mathcal{R}$: $\|R\|_\infty=\max_{\{|v_i|=1\}}|R(v_1,v_2,v_3,v_4)|$.
 \begin{lem}
If $\|R-g\varowedge g\|_\infty\leq\frac1{2n}$, then $R\in\mathcal{P}$.
 \end{lem}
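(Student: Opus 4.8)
The plan is to use that $\op{Ric}^*_R$ depends linearly on $R$, together with the fact that the reference tensor $g\varowedge g$ is the curvature of the round model, for which $\op{Ric}^*$ is already positive definite. Writing $S = R - g\varowedge g$, for every $J\in\mathcal{J}_g$ and every $X\in V$ one has $\op{Ric}^*_R(X,X) = \op{Ric}^*_{g\varowedge g}(X,X) + \op{Ric}^*_S(X,X)$ by linearity. The first summand was evaluated in Section \ref{S5}: for the constant-sectional-curvature-one tensor $R_0 = g\varowedge g$ one gets $\op{Ric}^*_{R_0}(X,X) = g(X,X) = |X|^2$, and that computation is valid for every orthogonal $J$ because it only uses an arbitrary $J$-adapted orthonormal frame. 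Thus everything reduces to bounding the perturbation term $\op{Ric}^*_S(X,X)$ from below by $-|X|^2$.

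First I would reduce to $|X|=1$, since $\op{Ric}^*_S(X,X)$ is homogeneous of degree two in $X$. Fixing a unit $X$ and some $J\in\mathcal{J}_g$, I choose a $J$-adapted orthonormal frame and apply the defining trace formula $\op{Ric}^*_S(X,X)=\tfrac12\sum_{i=1}^{2n}S(X,JX,e_i,Je_i)$. In each of the $2n$ summands all four entries $X$, $JX$, $e_i$, $Je_i$ are unit vectors ($JX$ is a unit vector because $J$ is orthogonal, and $Je_i$ is a frame vector up to sign), so each summand is bounded in absolute value by $\|S\|_\infty$. Hence
\[
|\op{Ric}^*_S(X,X)|\ \le\ \tfrac12\cdot 2n\cdot\|S\|_\infty\ =\ n\,\|S\|_\infty.
\]

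Combining the two steps, for $|X|=1$ we obtain $\op{Ric}^*_R(X,X)\ge 1 - n\|S\|_\infty$, so the hypothesis $\|R-g\varowedge g\|_\infty\le\tfrac1{2n}$ yields $\op{Ric}^*_R(X,X)\ge\tfrac12>0$; by homogeneity $\op{Ric}^*_R(X,X)\ge\tfrac12|X|^2\ge0$ for all $X\in V$, and since this estimate is uniform in $J$, we conclude $R\in\mathcal{P}$. I expect the only genuinely delicate points to be bookkeeping rather than substance: confirming that the Section \ref{S5} evaluation of $\op{Ric}^*_{g\varowedge g}$ is independent of the frame and of $J$ (so that the reference value $|X|^2$ is attained simultaneously for all $J\in\mathcal{J}_g$), and checking that the sign and normalization conventions for $\op{Ric}^*$ and for $\|\cdot\|_\infty$ match those fixed in the definition of $\mathcal{P}$. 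The term count and the triangle-inequality estimate themselves are routine.
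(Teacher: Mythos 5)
Your proof is correct and follows essentially the same route as the paper: decompose $R=g\varowedge g+S$, evaluate the round-model term as $|X|^2$ for every orthogonal $J$ (the Section \ref{S5} computation, which indeed uses only orthonormality of a $J$-adapted frame), and bound the perturbation termwise by $\|S\|_\infty$ over the $2n$-term trace. The only (harmless) difference is that you estimate via the defining formula $\tfrac12\sum_i S(X,JX,e_i,Je_i)$, which even leaves the slack $\op{Ric}^*_R(X,X)\ge\tfrac12$, whereas the paper uses the equivalent expression $\sum_i \check{R}(X,e_i,JX,Je_i)$ and gets exactly $\ge 0$.
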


 \begin{proof}
Denote $\check{R}= R-g\varowedge g$. Then for any $J\in\mathcal{J}_g$ and $X\in V$ with $\|X\|=1$
we get
 \begin{multline*}
\op{Ric}^*_R(X,X)=\sum R(X,e_i,JX,Je_i)=\|X\|^2+\sum\check{R}(X,e_i,JX,Je_i)\\
\ge\|X\|^2-2n\|X\|^2\max_{\|u\|=\|v\|=1}|\check{R}(u,v,Ju,Jv)|\ge0.
 \end{multline*}
Thus $R\in\mathcal{P}$.
 \end{proof}

\subsection{A non-existence alternative to Theorem \ref{thm2}}
Write $g\in\mathcal{P}$ if the curvature tensor of $g$ satisfies this positivity property on every
tangent space $V=T_xM$, $x\in M$.
The set $\mathcal{P}$ is a neighborhood of
the round metric $g_0$ on $M=\Ss$ in the space of all metrics in $C^2$-topology.

 \begin{theorem}\label{Thlast}
$\Ss$ possesses no Hermitian structure $(g,J,\omega)$ with $g\in\mathcal{P}$.
 \end{theorem}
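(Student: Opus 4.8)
The plan is to run the Sekigawa--Vanhecke argument from Section~\ref{S5} verbatim, replacing the exact $*$-Einstein identity (which held only for $g_0$) by the weaker positivity hypothesis $g\in\mathcal{P}$. Suppose for contradiction that $(g,J,\omega)$ is a Hermitian structure on $\Ss$ with $g\in\mathcal{P}$. The goal is to show that the first Chern form $\gamma_1$, given by the master formula \eqref{Chern}, namely $8\pi\gamma_1=2\psi+\varphi$, is a positive $(1,1)$-form; then $\gamma_1^3$ is a nowhere-vanishing volume form, so $\int_{\Ss}\gamma_1^3\neq0$, forcing $c_1^3\neq0$ and contradicting $H^2(\Ss)=0$.

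The argument splits the right-hand side of \eqref{Chern} into its two pieces and shows each is non-negative, with at least one strictly positive. For the $\varphi$ term I would reuse the computation from Section~\ref{S5} unchanged: since $\varphi(X,JX)=\op{Tr}(J(\nabla_XJ)(\nabla_{JX}J))$ evaluates to $\|\nabla_XJ\|^2\ge0$, the form $\varphi$ is non-negative, independently of the metric. The genuinely new point is the $\psi$ term. Here $\psi(X,Y)=\sum R_\nabla(X,Y,e_i,Je_i)$ and $\psi(X,JX)=-2\op{Ric}^*_\nabla(X,X)$ (up to the sign convention fixed in Section~\ref{S5}), so the positivity of $\psi$ as a $(1,1)$-form is exactly the statement $\op{Ric}^*_\nabla(X,X)>0$ for all $X\neq0$. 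This is precisely where the hypothesis $g\in\mathcal{P}$ enters: by definition $g\in\mathcal{P}$ means $\op{Ric}^*_R(X,X)\ge0$ for the curvature tensor $R=R_g$ at every point and for every $J\in\mathcal{J}_g$, in particular for the given integrable $J$. Combining $\psi\ge0$ (in fact $>0$ once we upgrade the semidefinite bound to strict, which the Lemma of \S6.1 does on a $C^2$-neighborhood of $g_0$ where the round contribution $\|X\|^2$ dominates) with $\varphi\ge0$ yields $8\pi\gamma_1=2\psi+\varphi>0$.

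With $\gamma_1$ established as a positive $(1,1)$-form on the complex $3$-fold $(\Ss,J)$, the topological finish is routine: a positive $(1,1)$-form has $\gamma_1^3$ equal to a positive multiple of the Riemannian volume form, hence $\int_{\Ss}\gamma_1^3>0$. But $\gamma_1$ is a closed form representing $c_1\in H^2_{\text{dR}}(\Ss)=0$, so $\gamma_1=d\beta$ is exact, and by Stokes $\int_{\Ss}\gamma_1^3=\int_{\Ss}d(\beta\wedge\gamma_1^2)=0$, the contradiction.

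The main obstacle is the strictness of the positivity, i.e.\ passing from the semidefinite condition defining $\mathcal{P}$ to the strict inequality $\gamma_1>0$ needed for $\gamma_1^3$ to be a volume form. The defining inequality for $\mathcal{P}$ is only $\op{Ric}^*_R(X,X)\ge0$, which alone gives $\psi\ge0$ and hence merely $\gamma_1\ge0$; a semidefinite closed form can integrate to zero. The resolution is that on the $C^2$-neighborhood of $g_0$ furnished by the Lemma in \S6.1 the estimate is genuinely strict, since the leading term $\|X\|^2$ is not fully consumed by the perturbation bound $\frac{1}{2n}$, so in fact $\op{Ric}^*_\nabla(X,X)>0$ and $\psi>0$ there. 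I should therefore phrase the conclusion as holding on this open neighborhood rather than on the closure of $\mathcal{P}$, or equivalently sharpen the hypothesis so that $g\in\mathcal{P}$ is read with the strict inner bound; either way the compactness of $\mathcal{J}_g$ guarantees the strict positivity holds uniformly over all $J$, which is what makes the perturbative neighborhood genuinely open in $C^2$.
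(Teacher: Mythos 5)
There is a genuine gap, and you yourself put your finger on it: your argument needs the strict inequality $\gamma_1>0$, while the hypothesis of Theorem \ref{Thlast} is membership in $\mathcal{P}$, which is defined by the \emph{semidefinite} condition $\op{Ric}^*_R(X,X)\ge0$ (for all $X$ and all $J\in\mathcal{J}_g$). Your proposed repair --- restricting to the $C^2$-neighborhood furnished by the Lemma of Section \ref{S6}, or reading the hypothesis ``with the strict inner bound'' --- changes the statement rather than proving it: that Lemma is only a \emph{sufficient} criterion for membership in $\mathcal{P}$, so the neighborhood it produces is in general a proper subset of $\mathcal{P}$, and $\mathcal{P}$ genuinely contains metrics for which $\op{Ric}^*$ degenerates at some point in some direction for some $J$; for those metrics $\psi$ is only semidefinite and your chain $\gamma_1>0\Rightarrow\int_{\Ss}\gamma_1^3>0$ breaks down exactly at the point you flag (``a semidefinite closed form can integrate to zero''). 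Incidentally, with the paper's conventions one has $\psi(X,JX)=+2\op{Ric}^*_\nabla(X,X)$ rather than $-2\op{Ric}^*_\nabla(X,X)$, and the compactness of $\mathcal{J}_g$ plays no role at this step: that $\psi\ge0$ for the given $J$ is immediate from the definition of $\mathcal{P}$, which quantifies over all $J$.

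The missing idea is the paper's treatment of the equality case, which makes the semidefinite hypothesis suffice. From $\psi\ge0$ and $\vp\ge0$ one only gets $\gamma_1\ge0$, but then $0=c_1^3[\Ss]=\int_{\Ss}\gamma_1^3$, and expanding $(8\pi\gamma_1)^3=(2\psi+\vp)^3$ writes the integrand as a sum of wedge products of non-negative $(1,1)$-forms, each of which is a pointwise non-negative $6$-form on the complex threefold $(\Ss,J)$. A sum of non-negative summands with vanishing total integral forces every summand to vanish identically; in particular the $\vp^3$-term vanishes, from which the paper concludes $\vp=0$, i.e.\ $\|\nabla J\|^2=0$, i.e.\ $\nabla J=0$. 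Then $(g,J,\omega)$ would be a K\"ahler structure on $\Ss$, which is impossible since the K\"ahler form would be symplectic and $H^2_{\text{dR}}(\Ss)=0$. In other words, where your argument tries to force nondegeneracy of the Chern form (and must shrink the hypothesis to do so), the paper allows $\gamma_1$ to degenerate and converts the degeneration itself into the contradiction. Replacing your final paragraph by this equality-case analysis would turn your proposal into a proof of the theorem as stated.
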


 \begin{proof}
In formula \eqref{Chern} $\vp\ge0$, and if $g\in\mathcal{P}$ then $\psi\ge0$ as well.
Thus $\gamma_1\ge0$ and we conclude $0=c_1^3[\Ss]=\int_{\Ss}\gamma_1^3\ge0$.
This integral is the sum of several non-negative summands, the last of which is $\int_{\Ss}\vp^3$.
Since all of these summands have to vanish, we conclude $\vp=0$ implying $\|\nabla J\|^2=0$.
Thus $\nabla J=0$ meaning that $(g,J,\omega)$ is a K\"ahler structure on $\Ss$ and this is a
contradiction.
  \end{proof}

 \begin{cor}
If $\|R_\nabla-g\varowedge g\|_\infty\leq\frac16$ for the curvature $R_\nabla$ of a metric $g$ on $\Ss$,
then no $g$-orthogonal almost complex structure $J$ is integrable.
 \end{cor}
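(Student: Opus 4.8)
The plan is to read this corollary as an immediate combination of Theorem \ref{Thlast} with the pointwise Lemma just proved, specialized to $n=3$. First I would interpret the hypothesis fiberwise: the bound $\|R_\nabla-g\varowedge g\|_\infty\leq\frac16$ is imposed at every point $x\in\Ss$, so that on each tangent space $V=T_x\Ss$ the algebraic curvature tensor $R_\nabla(x)$ satisfies $\|R_\nabla(x)-g\varowedge g\|_\infty\leq\frac16$.

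Next I would apply the Lemma with $n=3$, for which $\frac1{2n}=\frac16$ is exactly the threshold appearing in the hypothesis. At each point this yields $R_\nabla(x)\in\mathcal{P}$, i.e. $\op{Ric}^*_{R_\nabla}(X,X)\ge0$ for all $X\in T_x\Ss$ and all $J\in\mathcal{J}_g$. By the definition of $\mathcal{P}$ for metrics (the curvature satisfying the positivity property on every tangent space), this is precisely the assertion $g\in\mathcal{P}$. Theorem \ref{Thlast} then forbids any Hermitian structure $(g,J,\omega)$ on $\Ss$ with $g\in\mathcal{P}$, and since an integrable $g$-orthogonal almost complex structure $J$ is equivalent, via $\omega(X,Y)=g(JX,Y)$, to such a Hermitian triple, no such $J$ can be integrable.

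The step I expect to require the most care — though it is modest — is checking that the fiberwise hypothesis assembles into the global condition $g\in\mathcal{P}$ with no loss in the constant: the norm $\|\cdot\|_\infty$, the cone $\mathcal{P}$, and the positivity of $\op{Ric}^*$ are all defined pointwise, with the quantifier over $J$ ranging only over the fiber $O(6)/U(3)$, so there is no global obstruction. One only has to confirm that the numerical threshold $\frac16$ is genuinely $\frac1{2n}$ at $n=3$ and that $R_\nabla(x)$ is correctly regarded as an algebraic curvature tensor in the sense of the previous subsection at each point, after which the corollary follows with no further computation.
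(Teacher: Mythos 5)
Your proposal is correct and is exactly the argument the paper intends (the corollary is stated without proof precisely because it is this immediate combination): the pointwise Lemma with $n=3$ gives the threshold $\frac1{2n}=\frac16$ and hence $g\in\mathcal{P}$ on every tangent space, and Theorem \ref{Thlast} then excludes any Hermitian structure, i.e.\ any integrable $g$-orthogonal $J$. Your care about the fiberwise-to-global passage is warranted but, as you note, vacuous since $\mathcal{P}$ is defined pointwise.
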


Note that this can be again considered as a perturbation result for the metric $g_0$,
for which the curvature tensor is $R_{\nabla_0}=g_0\varowedge g_0$:
If $\|R_\nabla-R_{\nabla_0}\|_\infty\leq\epsilon_1$, $\|g-g_0\|_\infty\leq\epsilon_2$ and
$\epsilon_1+4\epsilon_2+2\epsilon_2^2\leq\frac16$ (one can check that this follows from the linear constraint
$\epsilon_1+\bigl(2+\sqrt{13/3}\bigr)\epsilon_2\leq\frac16$),
the claim follows. Indeed (note that $\|v\|^2=g_0(v,v)$) we have:
 \begin{gather*}
\tfrac12\|g\varowedge g-g_0\varowedge g_0\|_\infty\leq
\max_{\|v_i\|=1}|g(v_1,v_3)g(v_2,v_4)-g_0(v_1,v_3)g_0(v_2,v_4)|\leq\\
\max_{\|v_i\|=1}|g(v_1,v_3)-g_0(v_1,v_3)|\,|g(v_2,v_4)|+|g(v_2,v_4)-g_0(v_2,v_4)|\,|g_0(v_1,v_3)|\\
\leq\|g-g_0\|_{\infty}(2+\|g-g_0\|_{\infty}).
 \end{gather*}
Thus $\|R_\nabla-g\varowedge g\|_\infty\leq\|R_\nabla-g_0\varowedge g_0\|_\infty+\|g_0\varowedge g_0-g\varowedge g\|_\infty\leq
\epsilon_1+2\epsilon_2(2+\epsilon_2)$.

\section{Concluding remarks}\label{S7}

The non-existence results of this paper are not sharp. Indeed, both Theorems \ref{thm2} and
Corollary of Theorem \ref{Thlast} deal with rough upper bound and could be further improved.

Note also that the property of an almost complex structure $J$ being $g$-orthogonal depends
only on the conformal class of the metric\footnote{This has the following corollary generalizing Theorem \ref{thm1}:
If $g$ is conformally equivalent to $g_0$ on $\Ss$, then the space $J\in\mathcal{J}_g(\Ss)$ contains no complex structures.}, while the Riemann and holomorphic Ricci tensors, used in
the proofs, are not conformally invariant. It is a challenge to further elaborate the
results to get better bounds, implying non-existence of orthogonal complex structures
in a larger neighborhood of the round metric on $\Ss$.

Every almost complex structure $J$ on $\Ss$ is orthogonal with respect to some metric $g$, but
as this (or any conformally equivalent metric) can be far from $g_0$, the positivity argument will not work.

Note that all known proofs of non-existence of Hermitian structures for certain $g$ use only one property of the 6-sphere,
namely that $H^2(\Ss)=0$. It would be interesting to find a proof of non-existence of orthogonal complex
structures based on some other ideas.

\bigskip

\textsc{Acknowledgement.} I am grateful to Oliver Goertsches for a careful reading of the first draft,
and useful suggestions on the exposition.


\end{document}